\documentclass{amsart}
\usepackage{amsthm,amscd,amssymb,verbatim,epsf,amsmath,eurosym,amsfonts,mathrsfs,graphicx}
\usepackage[colorlinks=true,linkcolor=blue,citecolor=blue]{hyperref}
\usepackage{amsmath, amsthm, amscd, amsfonts, amssymb, graphicx, color}
\makeatletter
\@namedef{subjclassname@2020}{\textup{2020} Mathematics Subject Classification}
\makeatother

\begin{document}

\makeatletter
\newcommand{\verbatimfont}[1]{\renewcommand{\verbatim@font}{\ttfamily#1}}
\makeatother
\newcommand{\R}{\mathbb{R}}
\newcommand{\C}{\mathbb{C}}
\newcommand{\om}{\omega}
\newcommand{\I}{{\mbox{I}}}
\newcommand{\II}{{\mbox{II}}}
\newcommand{\III}{{\mbox{III}}}
\newcommand{\Hess}{\mbox{\rm{Hess}}}
\newcommand{\Sing}{\mbox{\rm{Sing}}}
\newcommand{\Ind}{\mbox{\rm{Ind}}}
\theoremstyle{plain}
\newtheorem{Thm}{Theorem}
\newtheorem{Cor}{Corollary}
\newtheorem{Ex}{Example}
\newtheorem{Con}{Conjecture}
\newtheorem{Main}{Main Theorem}
\newtheorem{Lem}{Lemma}
\newtheorem{Prop}{Proposition}

\theoremstyle{definition}
\newtheorem{Def}{Definition}
\newtheorem{Note}{Note}
\newtheorem{Question}{Question}

\newtheorem{remark}{Remark}
\newtheorem{notation}{Notation}
\renewcommand{\thenotation}{}

\renewcommand{\rm}{\normalshape}%

\title{A Note on Umbilic Points at Infinity}
\begin{abstract}
In this note a definition of {\em umbilic point at infinity} is proposed, at least for surfaces that are homogeneous polynomial graphs over a plane in Euclidean 3-space. This is a stronger definition than that of Toponogov in his study of complete convex surfaces, and allows one to distinguish between different umbilic points at infinity. 

It is proven that all such umbilic points at infinity are isolated, that they occur in pairs and are the zeroes of the projective extension of the third fundamental form,  as developed in \cite{gao}. A geometric interpretation for our definition is that an umbilic point at infinity occurs when the tangent to the level set at infinity is also an asymptotic direction at infinity.

\end{abstract}
\author{Brendan Guilfoyle}
\address{Brendan Guilfoyle\\
          School of STEM \\
          Munster Technological University, Kerry\\
          Tralee  \\
          Co. Kerry \\
          Ireland.}
\email{brendan.guilfoyle@mtu.ie}

\date{\today}
\subjclass[2020]{53C12, 53A05, 34K32, 53A20}
\keywords{umbilic points at infinity; real polynomials; fields of principal directions}
\maketitle


\section{Isolated Umbilic Points}

Points on a surface in ${\mathbb R}^3$ where the principal curvatures are equal, called {\it umbilic points}, play a central role in many areas of classical surface theory. For example, numerous results exist about totally umbilic surfaces and their generalizations  \cite{Hopf_book} \cite{sanini} \cite{soum_toubiana} \cite{spivak}. Isolated umbilic points in particular have mysterious properties. The distinction between smoothness and real analyticity for a surface is intimately related to its umbilic points, as has been observed from both dynamic and stationary perspectives \cite{guil} \cite{gak_hopf} \cite{HandW1} \cite{Hopf}. Their indices have been found to have applications in the study of zeros of holomorphic polynomials \cite{gak_roots}.

The purpose of this paper is to extend the notion of umbilic point to {\em umbilic point at infinity} in the case where the surface in question is a graph over a plane.The principal foliations at infinity have been considered previously by Ando using a one-point compactification of the surface \cite{ando}. In \cite{gao}, the authors defined umbilic points at infinity for more general polynomial graphs via the zeros of the projective extension of the third fundamental form.

This paper considers an alternative definition in terms of the fall-off of the curvature at infinity. We show that this agrees with the definition using the projective extension described in \cite{gao}. Moreover we show that such an umbilic point at infinity can be characterised as a point at infinity where an asymptotic direction becomes tangent to the level set of the defining function.

In the case where the surface is given by a homogeneous polynomial graph, the points at infinity can be identified with a circle and asymptotic geometric invariants can be computed relatively easily. Moreover, umbilic points at infinity obtain a real projective significance.

In \cite{Top} Toponogov says that a complete convex plane $P\subset{\mathbb R}^3$ must have {\em an umbilic at infinity} if it has no finite umbilic points but
\begin{equation}\label{d:top_umatinf}
\inf_{p\in P}|\kappa_1(p)-\kappa_2(p)|=0,
\end{equation}
where $\kappa_1,\kappa_2$ are the principal curvatures of $P$.

In the same paper Toponogov conjectures that every complete convex plane must have an umbilic point, albeit at infinity in his sense, i.e. equality (\ref{d:top_umatinf}) must hold.  Graphs of polynomial functions are easily seen to satisfy (\ref{d:top_umatinf}), so Toponogov's conjecture holds for such surfaces. A proof of Toponogov's conjecture is proposed in \cite{gak_top} which uses Fredholm regularity of an associated boundary value problem and mean curvature flow with boundary in co-dimension 2.

In this paper we adopt a stronger definition than Toponogov's for homogeneous polynomial graphs. To set notation and assist in the discussion, let us adopt the following definitions here and throughout.   

$P$ is a complete embedding of ${\mathbb R}^2$ in ${\mathbb R}^3$ defined by a graph $x^3=f(x^1,x^2)$ for some polynomial function $f$.  The Gauss map $\pi:P\rightarrow {\mathbb S}^2$ takes a point on the surface in ${\mathbb R}^3$ to the direction of its unit normal. 

Moving to complex coordinates $z=x^1+ix^2$ on ${\mathbb R}^2$ so that $x^3=f(z,\bar{z})$ and holomorphic coordinate $\xi$ about the North pole on ${\mathbb S}^2$ in the image of the Gauss map, the following relation is easily seen to hold:
\begin{equation}\label{e:gmaprel}
\frac{\xi}{1-\xi\bar{\xi}}=-\partial_{\bar{z}}f.
\end{equation}
As a point goes out to infinity on the surface, the normal generally tips over to the equator $|\xi|=1$. Denote this circle by ${\mathbb S}^1_\infty\subset{\mathbb S}^2$. This defines a distance to infinity which we use in the sequel.
\vspace{0.1in}
\begin{Def}
The {\em distance $d_\infty(p)$ of a point $p$ to infinity} is the spherical distance of $\pi(p)$ to ${\mathbb S}^1_\infty$ in ${\mathbb S}^2$. In holomorphic coordinates, the distance to infinity is simply $d_\infty=1-|\xi|$.
\end{Def}
\vspace{0.1in}
We can now define what it means to have an umbilic point at infinity.
\vspace{0.1in}
\begin{Def}\label{d:umdata}
$P$ has an {\em umbilic at infinity at $A\in [0,2\pi)$} if
\[
\lim_{R\rightarrow \infty}\left.\frac{|\kappa_1-\kappa_2|}{d_\infty}\right|_{z=Re^{iA}}=0.
\]
\end{Def}
\vspace{0.1in}
As $p$ goes to infinity in $P$, $d_\infty(p)$ goes to 0, and so this is a stronger requirement than Toponogov's.  To have an umbilic point at infinity by our definition is to have a direction along which the difference between the curvatures goes to zero at least as fast as the Gauss map approaches the equator.

As we will see in Section \ref{s6}, the elliptic paraboloid would have an umbilic at infinity according to Toponogov, but has no umbilic points at infinity according to our definition. 

A geometric interpretation for our definition given below is that an umbilic point at infinity occurs when the tangent to the level set at infinity is also an asymptotic direction at infinity.

For homogeneous polynomial graphs we prove: 
\vspace{0.1in}
\begin{Thm}\label{t:1}
The graph of any homogeneous polynomial has a finite, even number of umbilic points at infinity. If the polynomial has degree $n$, then the number of umbilic points at infinity is no more than $6n-8$.
\end{Thm}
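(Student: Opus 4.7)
The plan is to express the condition of Definition~\ref{d:umdata} along the ray $z=Re^{iA}$ as the vanishing of the restriction to the unit circle of a single homogeneous real polynomial on $\R^2$, and then to count its real zeros. In the complex coordinates of the paper, with $\alpha = \partial_z f$, $\mu = \partial_z^2 f$ and $\eta = \partial_z \partial_{\bar z} f$, the polynomial I expect to control everything is
\[
\Psi \;=\; \mu\bar\alpha^{\,2} + \bar\mu\alpha^{2} - 2\eta|\alpha|^{2}.
\]
A short computation shows that $\Psi$ is precisely $\Hess(f)$ evaluated on the level-set tangent of $f$; in particular $\Psi = 0$ exactly when the level-set tangent is an asymptotic direction, which matches the geometric interpretation of umbilic-at-infinity announced in the paper.

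First, I would rewrite the standard graph formulas for $H$ and $K$ in complex form. With $W^2 = 1 + 4|\alpha|^2$, a direct computation collapses the numerator of $H$ to $4(\eta - \Psi)$, giving $H = 2(\eta - \Psi)/W^3$, while $K = 4(\eta^2 - |\mu|^2)/W^4$. Substituting into $(\kappa_1 - \kappa_2)^2 = 4H^2 - 4K$ yields a clean expression in $\alpha,\mu,\eta,\Psi$.

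Second, I would carry out the asymptotic expansion along $z = Re^{iA}$ as $R \to \infty$. Because $f$ is homogeneous, along the ray each of $\alpha,\mu,\eta,\Psi$ is a monomial in $R$ times a trigonometric function of $A$. Combined with the expansion $d_\infty \sim 1/(2|\alpha|)$ extracted from (\ref{e:gmaprel}), the ratio $|\kappa_1-\kappa_2|/d_\infty$ comes out to leading order as $R^{n-2}|\tilde\Psi(A)|/|\tilde\alpha(A)|^2$, where a tilde denotes restriction to the unit circle. Hence the limit in Definition~\ref{d:umdata} vanishes iff $\tilde\Psi(A) = 0$; when $\tilde\Psi(A)$ does vanish, the subleading $K$-contribution governs the decay and the ratio in fact falls off like $1/R$, confirming that such $A$ are genuine umbilic points at infinity.

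Third, I would count zeros. Since $\alpha,\bar\alpha$ have degree $n-1$ and $\mu,\eta$ have degree $n-2$, $\Psi$ is a real homogeneous polynomial in $(x^1,x^2)$ of degree $3n-4$. Its restriction to the unit circle is therefore a trigonometric polynomial of degree $3n-4$, with at most $2(3n-4) = 6n-8$ real zeros on $[0,2\pi)$. The identity $\Psi(-x^1,-x^2) = (-1)^{3n-4}\Psi(x^1,x^2)$ pairs zeros antipodally as $\{A, A+\pi\}$, forcing the count to be even. The main obstacle will be the asymptotic bookkeeping of the second step -- in particular, checking that no leading terms cancel so as to mask $\Psi$ -- together with the non-degeneracy check $\Psi \not\equiv 0$ needed for finiteness, which rules out cylindrical graphs such as $f = (x^1)^n$.
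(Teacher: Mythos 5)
Your proposal is correct and follows essentially the same route as the paper: your $\Psi$ is (up to the conjugation convention actually used in the proof of the paper's Lemma, and an overall factor of $-\tfrac{1}{4}$) the polynomial ${\mathcal H}(f)$ of equation (\ref{d:hf}), a real homogeneous polynomial of degree $3n-4$ whose zeros on the unit circle, paired antipodally as $\{A,A+\pi\}$, give the bound $2(3n-4)=6n-8$ exactly as in the paper's Proposition \ref{p:umainf2}; your derivation via the classical formulas for $H$ and $K$ replaces the paper's computation of $\sigma$ in oriented line space, but lands on the same degree count and the same parity argument. The one place you go beyond the paper is the non-degeneracy caveat: the paper's proof asserts without comment that the zeros of $\tilde{\mathcal H}(f)$ are isolated, which fails precisely when ${\mathcal H}(f)\equiv 0$ (e.g.\ for cylindrical graphs such as $f=(x^1)^n$, where $|\Hess f|\equiv 0$ forces ${\mathcal H}(f)\equiv 0$ by the Lemma), so your flagging of $\Psi\not\equiv 0$ as a needed hypothesis identifies a real gap in the statement as given rather than a defect of your own argument.
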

\vspace{0.1in}
Note that the upper estimate on the number of umbilic points at infinity can be improved  and refined - see \cite{gao}. We prove the stronger analog of Toponogov's conjecture for homogeneous graphs:
\vspace{0.1in}
\begin{Thm}\label{t:2}
A homogeneous polynomial graph $P$ over a plane in ${\mathbb R}^3$ satisfies 
\[
\inf_{p\in P}\frac{|\kappa_1(p)-\kappa_2(p)|}{d_\infty(p)}=0.
\]

\end{Thm}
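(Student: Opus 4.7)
The plan is to prove Theorem~\ref{t:2} by case analysis on the degree $n$ of $f$. If $n \leq 1$ then $P$ is a plane and $\kappa_1 \equiv \kappa_2$, so the conclusion is trivial.

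For $n \geq 3$ I would observe that the origin is automatically a \emph{flat umbilic}. Since $f$ is homogeneous of degree $\geq 3$, both $\nabla f$ and $\Hess(f)$ are homogeneous polynomials of positive degree and therefore vanish at $(0,0)$. At $p=(0,0,0)\in P$ the graph is tangent to the horizontal plane with vanishing second fundamental form, giving $\kappa_1(p)=\kappa_2(p)=0$. Since $d_\infty(p)=1>0$ the ratio is zero at $p$ and the infimum in Theorem~\ref{t:2} is attained.

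The remaining case $n=2$ is the substantive one. After an orthogonal change of coordinates I would write $f=\lambda_1 x^2+\lambda_2 y^2$ and treat three subcases. If $\lambda_1=\lambda_2$ then the origin is umbilic and we are done. If $\lambda_1\lambda_2>0$ with $\lambda_1\ne \lambda_2$ (non-circular elliptic), I would solve the umbilic system $f_{xx}=\mu(1+f_x^2)$, $f_{yy}=\mu(1+f_y^2)$, $f_{xy}=\mu f_x f_y$ explicitly and exhibit two finite umbilic points on the axis associated with the smaller eigenvalue. If $\lambda_1\lambda_2\leq 0$ (hyperbolic or degenerate), I would either appeal to Theorem~\ref{t:1} or note by direct computation that the leading behaviour of $|\kappa_1-\kappa_2|/d_\infty$ along a ray $z=Re^{iA}$ is a positive multiple of $|\lambda_1\lambda_2|\,|\lambda_1\cos^2 A+\lambda_2\sin^2 A|$, which necessarily vanishes for some $A\in[0,2\pi)$; Definition~\ref{d:umdata} then gives the conclusion.

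The main obstacle is the non-circular elliptic sub-case: as the elliptic paraboloid in Section~\ref{s6} shows, these surfaces have no umbilic point at infinity in the sense of Definition~\ref{d:umdata}, so the infimum must be realised by a genuine finite umbilic whose existence requires an explicit algebraic construction and the verification that the candidate coordinates are real, which uses the strict ordering of the two (same-sign) eigenvalues.
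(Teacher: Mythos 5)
Your proof is correct, but it takes a genuinely different and much more elementary route than the paper. The paper never considers the degree: it proves the factorization ${\mathcal H}(f)=-\tfrac{n}{4(n-1)}|{\mbox{Hess}}f|\,f$, identifies the umbilic points at infinity with the singularities of the projective extension of the principal foliation to the equator of $S^2$ (via \cite{gao}), and then runs a Poincar\'e--Hopf index count, $\sum i(p)=\chi(S^2)=2$ with each infinite umbilic contributing index $\tfrac12$, to conclude that some umbilic --- finite or at infinity --- must exist. Your argument instead exploits the observation that homogeneity of degree $n\geq 3$ forces $\nabla f$ and ${\mbox{Hess}}f$ to vanish at the origin, so the origin is a flat point with $\kappa_1=\kappa_2=0$ and $d_\infty=1$, making the infimum trivially attained; only the quadric case $n=2$ requires work, and there your case analysis (circular vertex umbilic; two finite umbilics for the non-circular elliptic paraboloid; an umbilic at infinity where $\lambda_1\cos^2A+\lambda_2\sin^2A$ changes sign in the hyperbolic and degenerate cases) is sound and consistent with the paper's Section \ref{s6} computations. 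What your approach buys is complete elementarity --- no projective extension, no index theory; what it gives up is all the structural information the paper's proof yields (location, parity and index of the umbilics at infinity), and it does expose that the bare statement of Theorem \ref{t:2} is nearly vacuous for $n\geq 3$. Two small corrections for the write-up: the finite umbilics of $f=\lambda_1x^2+\lambda_2y^2$ with $0<\lambda_1<\lambda_2$ lie on the axis associated with the \emph{larger} eigenvalue (one finds $y^2=(\lambda_2-\lambda_1)/(4\lambda_1\lambda_2^2)$ on the $y$-axis), not the smaller; and in the degenerate case $\lambda_2=0$ the normalization of the limit by $|\partial_zf|^2$ is only legitimate along rays where the Gauss map actually reaches the equator (e.g.\ $A\neq\pm\pi/2$ for $f=\lambda_1x^2$), so you should fix such a generic ray before invoking Definition \ref{d:umdata}.
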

\vspace{0.1in}

This paper is organized as follows. In the next section the necessary background on surface geometry is covered and Section \ref{s3} applies this to smooth graphs. In Section \ref{s4} umbilic points at infinity are characterized for homogeneous polynomial graphs. 

Section \ref{s5} contains the proofs of Theorems \ref{t:1} and \ref{t:2}. In the final section the various geometric quantities are computed for elliptic and hyperbolic paraboloids and their umbilic points are infinity computed.

\vspace{0.1in}

\section{Background}\label{s2}

Our approach to describing surfaces in ${\mathbb R}^3$ and their curvatures is through the geometry of the space of oriented lines of ${\mathbb R}^3$ and its submanifolds \cite{gak2} \cite{gak5}. In this we seek properties that do {\em not} generalize to higher dimensions, phenomena that only occur for surfaces in three dimensions. 

Complex coordinates are useful as they allow one to more easily track winding numbers and count indices, and they have a special role in two dimensions. Most of what follows in this section can be found in the above cited papers and is summarized to fix notation and establish canonical relations.

Suppose now that $S$ is a smooth surface in ${\mathbb R}^3$ with second fundamental form $\Pi :TS\times TS\rightarrow {\mathbb R}$. On the convex parts of $S$ there may be umbilic points, namely, points where the eigenvalues of $\Pi$ are equal: $\kappa_1=\kappa_2$. These are generically isolated. At every non-umbilic point the eigen-directions of $\Pi$ form a pair of orthogonal lines, called the principal directions.      

The principal directions determine the principal foliations on $S$, which have singularities at the umbilic points. For an isolated umbilic point $p\in S$ there is a well-defined half-integer index determined by the principal foliation around $p$: ${\mbox{Ind }}(p)\in{\mathbb Z}/2$.

In order to compute these, start with the space of oriented lines in ${\mathbb R}^3$, which is modeled by $T{\mathbb S}^2$. Choosing holomorphic coordinates $\xi$ about the North pole of ${\mathbb S}^2$, construct complex coordinates $(\xi,\eta)$ on $T{\mathbb S}^2$ by identifying these complex numbers with the tangent vector $X\in T_\xi{\mathbb S}^2$ given by
\[
X=\eta\frac{\partial}{\partial\xi}+\bar{\eta}\frac{\partial}{\partial\bar{\xi}} .
\]

\begin{center}
\end{center}

Now given a parameterized surface $S$ in ${\mathbb R}^3$ the oriented normal lines to $S$ form a parameterized surface in $T{\mathbb S}^2$. Let the complex variable $\mu$ be the parameter on both $S$ and its normal lines.
That is, $S$ has oriented normal lines $(\xi,\eta)=(\xi(\mu,\bar{\mu}), \eta(\mu,\bar{\mu}))$,
where $\xi$ and $\eta$ are functions of $\mu$ and $\bar{\mu}$.

To compute the second fundamental form explicitly in terms of the derivatives of $\xi$ and $\eta$, choose an orthonormal frame $\{e_0,e_1,e_2\}$ along $S$ in ${\mathbb R}^3$ adapted to the surface $S$ in that $e_0$ is normal to $S$ while $e_1$ and $e_2$ are tangent. Clearly this is only well-defined up to a rotation about the normal, which, if we introduce complex frames
\[
e_+={\textstyle{\frac{1}{\sqrt{2}}}}\left(e_1+ie_2\right) \qquad\qquad e_-={\textstyle{\frac{1}{\sqrt{2}}}}\left(e_1-ie_2\right),
\]
is given by $(e_+,e_-)\rightarrow (e^{i\theta}e_+,e^{-i\theta}e_-)$. 
Define the complex quantities
\begin{equation}\label{e:spinco_def}
\sigma={\mbox{ II}}(e_+,e_+) \qquad\qquad \rho={\mbox{ II}}(e_+,e_-) ,
\end{equation}
where $\sigma$ is complex valued and $\rho$ is real.

The significance of these for surface theory is the following:

\vspace{0.1in}
\begin{Prop}\cite{gak5}
\[
|\sigma|={\textstyle{\frac{1}{2}}}|\kappa_1-\kappa_2| \qquad\qquad \rho={\textstyle{\frac{1}{2}}}(\kappa_1+\kappa_2),
\]
and the argument of $\sigma$ determines the principal directions, as measured in the complex coordinates. 
\end{Prop}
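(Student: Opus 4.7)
The plan is to expand the complex quantities $\sigma$ and $\rho$ in terms of the matrix entries of $\mathrm{II}$ in the orthonormal frame $\{e_1,e_2\}$, so write $h_{ij}=\mathrm{II}(e_i,e_j)$. Using bilinearity and symmetry of $\mathrm{II}$,
\[
\sigma=\mathrm{II}(e_+,e_+)=\tfrac{1}{2}\bigl(h_{11}-h_{22}+2ih_{12}\bigr),\qquad \rho=\mathrm{II}(e_+,e_-)=\tfrac{1}{2}(h_{11}+h_{22}),
\]
so $\rho$ is indeed real and is half the trace of $\mathrm{II}$, which gives $\rho=\tfrac{1}{2}(\kappa_1+\kappa_2)$.

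For the modulus of $\sigma$, compute
\[
|\sigma|^2=\tfrac{1}{4}\bigl((h_{11}-h_{22})^2+4h_{12}^2\bigr).
\]
I would then recognise the right-hand side as the discriminant of the characteristic polynomial of $\mathrm{II}$: expanding $(\kappa_1-\kappa_2)^2=(\kappa_1+\kappa_2)^2-4\kappa_1\kappa_2=(h_{11}+h_{22})^2-4(h_{11}h_{22}-h_{12}^2)$ yields exactly $(h_{11}-h_{22})^2+4h_{12}^2$, so $|\sigma|=\tfrac{1}{2}|\kappa_1-\kappa_2|$ follows immediately.

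For the statement about the argument of $\sigma$, I would exploit the transformation law of $\sigma$ under the residual $U(1)$ frame rotation $(e_+,e_-)\mapsto(e^{i\theta}e_+,e^{-i\theta}e_-)$ noted in the excerpt. Since $\sigma=\mathrm{II}(e_+,e_+)$ picks up weight $2$, it transforms as $\sigma\mapsto e^{2i\theta}\sigma$, while $\rho$ is invariant (weight $0$). Now choose $\theta$ so that the rotated frame $\{e_1',e_2'\}$ is the principal frame; then $h_{12}'=0$ and $h_{11}'-h_{22}'=\kappa_1-\kappa_2$, so in this frame $\sigma$ is the real number $\tfrac{1}{2}(\kappa_1-\kappa_2)$. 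Comparing with the transformation law gives $\arg\sigma=-2\theta\pmod{2\pi}$, which determines the rotation taking $e_1$ into the principal direction (with the expected sign-$2$ ambiguity reflecting the two orthogonal principal directions).

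The only real obstacle is bookkeeping: keeping the factors of $\tfrac{1}{\sqrt{2}}$, the signs of $i$'s and the distinction between a weight-$2$ and a weight-$0$ object consistent throughout. All three conclusions then fall out of the same short calculation, so I would organise the proof as one block that computes $\sigma$ and $\rho$ in a general orthonormal frame and then specialises to the principal frame to read off both the magnitude and the argument simultaneously.
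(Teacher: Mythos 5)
Your computation is correct. The paper states this Proposition without proof, simply citing \cite{gak5}; your expansion of $\sigma=\mathrm{II}(e_+,e_+)=\tfrac{1}{2}(h_{11}-h_{22}+2ih_{12})$ and $\rho=\mathrm{II}(e_+,e_-)=\tfrac{1}{2}(h_{11}+h_{22})$, the trace--determinant identity $(\kappa_1-\kappa_2)^2=(h_{11}-h_{22})^2+4h_{12}^2$, and the weight-$2$ transformation $\sigma\mapsto e^{2i\theta}\sigma$ under the residual frame rotation together constitute exactly the standard argument behind the cited result, so your proposal correctly supplies the omitted details (with the mod-$2\pi$ ambiguity in $2\theta$ correctly reflecting that a principal direction is an unoriented line).
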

\vspace{0.1in}
Umbilic points occur when $\sigma=0$ and, if $\sigma=H\xi^n\bar{\xi}^m$ for $n,m\in{\mathbb N}$ and $H$ a real non-zero function, then the origin would be an isolated umbilic point of index $(m-n)/2$. 

Turn now to the local computation of the second fundamental form in terms of these quantities:

\vspace{0.1in}
\begin{Prop} \label{p:bgnd}\cite{gak2}
The curvatures have the following expressions in terms of the first derivatives of the parameterization in oriented line space:  
\begin{equation}\label{e:spinco}
\rho=\frac{ \partial^+\eta\overline{\partial}\;\overline{\xi} -\partial^-\eta\partial\overline{\xi}}
{\partial^-\eta\overline{\partial^-\eta}-\partial^+\eta\overline{\partial^+\eta}}
\qquad\qquad
\sigma=\frac{\overline{\partial^+\eta}\partial\overline{\xi} -\overline{\partial^-\eta}\;\overline{\partial}\;\overline{\xi}}
{\partial^-\eta\overline{\partial^-\eta}-\partial^+\eta\overline{\partial^+\eta}},
\end{equation}
where
\begin{equation}\label{e:spincoa}
\partial^+\eta\equiv\partial \eta+\left(r-\frac{2\eta\overline{\xi}}{1+\xi\overline{\xi}}\right)\partial \xi
\qquad\qquad
\partial^-\eta\equiv\overline{\partial} \eta+\left(r-\frac{2\eta\overline{\xi}}{1+\xi\overline{\xi}}\right)\bar{\partial} \xi,
\end{equation}
and $\partial$ and $\bar{\partial}$ are differentiation with respect to $\mu$ and $\bar{\mu}$, respectively.

A 2-parameter family of oriented lines $(\xi(\mu,\bar{\mu}),\eta(\mu,\bar{\mu}))$ is orthogonal to a surface in ${\mathbb{R}}^3$ iff there exists a real function $r(\mu,\bar{\mu})$ satisfying:
\begin{equation}\label{e:intsur}
\bar{\partial} r=\frac{2\eta\bar{\partial}\bar{\xi}+2\bar{\eta}\bar{\partial}\xi}{(1+\xi\bar{\xi})^2}.
\end{equation}
This function is classically called the {\em support function} of $S$. If there exists one solution $r$ to this equation, then there exists a 1-parameter family of parallel surfaces, generated by the real constant of integration.
\end{Prop}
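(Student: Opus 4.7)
The plan is to exploit the identification of the space of oriented lines of $\mathbb{R}^3$ with $T\mathbb{S}^2$ and to realise the surface as a section of its normal line congruence. Stereographic coordinates $\xi$ about the north pole produce an explicit unit normal $e_0(\xi,\bar\xi)$ and an orthogonal complex frame $e_\pm$ spanning the perpendicular plane, while the tangent coordinate $\eta\in T_\xi\mathbb{S}^2$ encodes the foot of the perpendicular from the origin to the oriented line. A point on the line $(\xi,\eta)$ at signed distance $r$ from this foot is then an explicit vector-valued function $F(\xi,\eta,r)\in\mathbb{R}^3$ built from the standard identification.

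For a 2-parameter family of oriented lines $(\xi(\mu,\bar\mu),\eta(\mu,\bar\mu))$ the first task is to impose that the associated points $F(\mu,\bar\mu)=F(\xi,\eta,r(\mu,\bar\mu))$ trace out a surface orthogonal to the lines, i.e.\ $\langle\partial F,e_0\rangle=0$. Applying the chain rule and collecting the conformal factor $(1+\xi\bar\xi)^{-2}$ that appears whenever $e_0$ is differentiated, this single complex condition collapses to the equation
\[
\bar\partial r=\frac{2\eta\bar\partial\bar\xi+2\bar\eta\bar\partial\xi}{(1+\xi\bar\xi)^2},
\]
which is exactly \eqref{e:intsur}. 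Because $r$ is the signed distance along the normal direction, an additive real constant shifts the entire surface along its normals, producing the asserted 1-parameter family of parallel surfaces.

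To extract the curvatures I would compute the second fundamental form as $\Pi(X,Y)=-\langle\nabla_X e_0,Y\rangle$. Since $e_0$ depends only on $\xi$, its $\mu$- and $\bar\mu$-derivatives are proportional to $\partial\xi$ and $\bar\partial\xi$ respectively, and the tangent vectors $\partial F,\bar\partial F$ decompose in the complex frame $(e_+,e_-)$ with coefficients that organise precisely into the combinations $\partial^\pm\eta$ of \eqref{e:spincoa}; the term $r-\frac{2\eta\bar\xi}{1+\xi\bar\xi}$ packages together the $r$-dependent shift of the surface away from the foot of the perpendicular and the connection coefficient associated to the Sasaki structure on $T\mathbb{S}^2$. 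Evaluating $\Pi$ on $(e_+,e_+)$ and $(e_+,e_-)$ per \eqref{e:spinco_def} and inverting the resulting linear system for $\sigma$ and $\rho$ yields the two quotient formulas in \eqref{e:spinco}, with the common denominator $\partial^-\eta\,\overline{\partial^-\eta}-\partial^+\eta\,\overline{\partial^+\eta}$ playing the role of the Jacobian of the immersion expressed in the complex frame.

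The main obstacle is bookkeeping rather than conceptual: one must track the stereographic conformal factor $(1+\xi\bar\xi)^{-1}$ and the horizontal part of the Sasaki connection on $T\mathbb{S}^2$ consistently through every derivative, so that all such contributions are absorbed into the single combination defining $\partial^\pm\eta$. Once this is done, the remaining computation is routine linear algebra and the formulas \eqref{e:spinco} follow.
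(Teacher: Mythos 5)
The paper offers no proof of this proposition at all---it is imported verbatim from \cite{gak2}---so the only meaningful comparison is with the derivation in that reference, and your outline reproduces it: the identification of oriented line space with $T{\mathbb S}^2$, the orthogonality condition $\langle\partial F,e_0\rangle=0$ for the point map $F(\xi,\eta,r)$ collapsing to the support-function equation (\ref{e:intsur}) with the additive real constant generating the parallel surfaces, and the computation of the second fundamental form in the frame $(e_0,e_\pm)$ with the conformal and connection terms absorbed into $\partial^\pm\eta$. This is the standard argument and I see no gap in the plan, though the actual verification of (\ref{e:spinco}) still requires carrying out the bookkeeping you defer.
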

\vspace{0.1in}

\section{Smooth Graphs}\label{s3}

Consider then a plane $P$ embedded in ${\mathbb R}^3$ that is a graph $x^3=f(x^1,x^2)$ over ${\mathbb R}^2$. 
Introduce complex coordinates $z=x^1+ix^2$. The basic geometry of such surfaces is captured by:

\begin{Prop}\label{p:graph}
The support function of a graph is 
\begin{equation}\label{e:support_pl}
r=\frac{-f+z\partial_zf+\bar{z}\partial_{\bar{z}}f}{[1+4\partial_zf\partial_{\bar{z}}f]^{\scriptstyle{\frac{1}{2}}}},
\end{equation}
the normal direction is
\begin{equation}\label{e:xi_pl}
\xi=\frac{-2\partial_{\bar{z}}f}{1-[1+4\partial_zf\partial_{\bar{z}}f]^{\scriptstyle{\frac{1}{2}}}},
\end{equation}
and  the oriented normal line $(\xi,\eta)$ is
\begin{equation}\label{e:eta_pl}
\eta=\frac{2\partial_{\bar{z}}f(z\partial_zf-\bar{z}\partial_{\bar{z}}f)}{(1-[1+4\partial_zf\partial_{\bar{z}}f]^{\scriptstyle{\frac{1}{2}}})^2}
+\frac{z+2f\partial_{\bar{z}}f}{1-[1+4\partial_zf\partial_{\bar{z}}f]^{\scriptstyle{\frac{1}{2}}}}.
\end{equation}
\end{Prop}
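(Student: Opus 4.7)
The plan is to derive each of $\xi$, $r$, and $\eta$ directly from the geometric definition of the relevant quantity, working in the complex coordinate $z = x^1 + ix^2$ on $\mathbb{R}^2$. The basic preparatory identities are $\partial_1 = \partial_z + \partial_{\bar z}$ and $\partial_2 = i(\partial_z - \partial_{\bar z})$, together with the reality of $f$, so that $\overline{\partial_{\bar z}f} = \partial_z f$; in particular the un-normalised graph normal $(-\partial_1 f,-\partial_2 f,1)$ has Euclidean length $D := [1 + 4\partial_z f\,\partial_{\bar z} f]^{1/2}$. A recurring algebraic identity, obtained by squaring $\beta := 1 - D$, is $\beta^{2} = 2\beta + 4\partial_z f\,\partial_{\bar z} f$.

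For the Gauss map $\xi$, it is enough to substitute the proposed $\xi = -2\partial_{\bar z} f/\beta$ into (\ref{e:gmaprel}): the identity above gives $1 - \xi\bar\xi = 2/\beta$, and (\ref{e:gmaprel}) is then immediate. For the support function $r$, the geometric characterisation $r = \vec p\cdot \vec n$, with $\vec p = (x^1,x^2,f)$ the position vector and $\vec n$ the unit normal oriented consistently with (\ref{e:gmaprel}), yields (\ref{e:support_pl}) after rewriting $x^1\partial_1 f + x^2\partial_2 f$ as $z\partial_z f + \bar z\partial_{\bar z} f$.

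For $\eta$, recall that the oriented normal line through $\vec p$ with direction $\vec n$ has closest-to-origin point $\vec q = \vec p - r\vec n$, orthogonal to $\vec n$ and hence sitting in $T_{\vec n}\mathbb{S}^{2}$. Under the identification from Section \ref{s2}, writing $\vec q = \eta\,\partial_\xi \vec n + \bar\eta\,\partial_{\bar\xi}\vec n$ and differentiating the stereographic parameterisation of $\mathbb{S}^{2}$ supplies a general formula of the shape $\eta = \tfrac{1}{2}(1+\xi\bar\xi)\bigl[(q_1 + iq_2) - \xi q_3\bigr]$. Substituting the expressions already obtained for $\vec p$, $\vec n$, $r$, and $\xi$, and collapsing the result using the consequences $1+\xi\bar\xi = -2D/\beta$ and $(D+1)\beta + 4\partial_z f\,\partial_{\bar z} f = 0$ of the basic identity, reassembles the expression into the two terms displayed in (\ref{e:eta_pl}).

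The main obstacle is this last step: the algebra is dense, with numerous terms in $\beta$ and $D$ that must be reorganised using $\beta^{2} = 2\beta + 4\partial_z f\,\partial_{\bar z} f$ (equivalently $D^{2} - 1 = 4\partial_z f\,\partial_{\bar z} f$) to eliminate the radical $D$ from intermediate expressions and produce the stated form. An alternative would be to verify indirectly that the triple $(\xi,r,\eta)$ satisfies the integrability condition (\ref{e:intsur}) of Proposition \ref{p:bgnd} with $\mu = z$, but this involves comparable effort and obscures the geometric provenance of each summand in (\ref{e:eta_pl}).
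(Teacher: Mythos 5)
Your proposal is correct, but it diverges from the paper's proof in how it obtains $r$ and, more substantially, $\eta$. The paper computes the unit normal of the graph directly in the coordinates $(z,\bar z,x^3)$, reads off (\ref{e:gmaprel}) and hence (\ref{e:xi_pl}), and then gets (\ref{e:eta_pl}) in one line by substituting $\xi$ into the incidence relation $\eta=\tfrac{1}{2}(z-2f\xi-\bar z\xi^2)$, which holds for \emph{any} point of ${\mathbb R}^3$ on the oriented line --- in particular for the graph point $(z,f(z,\bar z))$ itself. This sidesteps your detour through the perpendicular foot $\vec q=\vec p-r\vec n$: you do not need $r$ before computing $\eta$, and the only algebra required is the identity $\beta^2=2\beta+4\partial_zf\partial_{\bar z}f$ you already isolated (I checked that your tangent-space formula for $\eta$ agrees with the incidence relation on $\vec n^{\perp}$, so your route does close, just with the ``dense algebra'' you anticipate). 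For the support function the roles are reversed: you derive $r=\vec p\cdot\vec n$ geometrically, which is cleaner than the paper's bare assertion that (\ref{e:support_pl}) ``is easily found to solve'' (\ref{e:intsur}); to make your version self-contained you should add the one remark that in this framework the affine parameter $\vec p\cdot\vec n$ of the normal congruence is precisely the quantity characterised by (\ref{e:intsur}), so no separate verification of that PDE is needed. Finally, your hedge about orienting $\vec n$ ``consistently with (\ref{e:gmaprel})'' is not cosmetic: the $\xi$ of (\ref{e:xi_pl}) satisfies $\xi\bar\xi=(D+1)/(D-1)>1$, i.e.\ it is the normal with negative $x^3$-component, and with the opposite orientation $\vec p\cdot\vec n$ would yield $-r$; keeping that sign convention explicit is essential for (\ref{e:support_pl}) and (\ref{e:eta_pl}) to come out as stated.
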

\begin{proof}
We  use coordinates $(z=x^1+ix^2,\bar{z}=x^1-ix^2,x^3)$ on ${\mathbb R}^3={\mathbb C}\times {\mathbb R}$ in which the Euclidean metric is
\[
g=\left[\begin{matrix}
0 &{\textstyle{\frac{1}{2}}} & 0 \\
{\textstyle{\frac{1}{2}}} & 0 & 0 \\
0 &0 & 1 
\end{matrix}\right].
\]
For any surface given by $x^3=f(x^1,x^2)=f(z,\bar{z})$, the unit normal is easily found to be
\begin{align}
\hat{N}&=-\frac{2\partial_{\bar{z}}f}{[1+4|\partial_zf|^2]^{\scriptstyle{\frac{1}{2}}}}\frac{\partial}{\partial z}-\frac{2\partial_zf}{[1+4|\partial_zf|^2]^{\scriptstyle{\frac{1}{2}}}}\frac{\partial}{\partial \bar{z}}+\frac{1}{[1+4|\partial_zf|^2]^{\scriptstyle{\frac{1}{2}}}}\frac{\partial}{\partial x^3}\nonumber\\
&=\frac{2\xi}{1+\xi\bar{\xi}}\frac{\partial}{\partial z}+\frac{2\bar{\xi}}{1+\xi\bar{\xi}}\frac{\partial}{\partial \bar{z}}+\frac{1-\xi\bar{\xi}}{1+\xi\bar{\xi}}\frac{\partial}{\partial x^3}\nonumber,
\end{align}
where the last equality identifies any unit vector in ${\mathbb R}^3$ with its complex coordinate $\xi$ on the unit ${\mathbb S}^2$. 

This proves equation (\ref{e:gmaprel}), and equation (\ref{e:xi_pl}) is just a rearrangement of this relation. 

Note how the image of the Gauss map approaches the equator $|\xi|=1$ at exactly the points where the slope of the function blows up. 

Equation (\ref{e:eta_pl}) follows from substituting the expression (\ref{e:xi_pl}) for $\xi$ into the incidence relationship for points and lines:
\[
\eta={\textstyle{\frac{1}{2}}}\left(x^1+ix^2-2x^3\xi-(x^1-ix^2)\xi^2\right)
     ={\textstyle{\frac{1}{2}}}\left(z-2f\xi-\bar{z}\xi^2\right).
\]

Finally, the support function in equation (\ref{e:support_pl}) is easily found to solve equation (\ref{e:intsur}).

\end{proof}
\vspace{0.1in}
\begin{Prop}
The complex quantities defined in (\ref{e:spinco_def}) which determine the principal curvatures and directions take the following form for a graph:
\vspace{0.1in}
\begin{equation}\label{e:shear_pl}
\sigma=\frac{2(1-\xi\bar{\xi})}{(1+\xi\bar{\xi})^3}\left(\bar{\xi}^{-2}\partial_z\partial_zf-2\partial_z\partial_{\bar{z}}f+\bar{\xi}^2\partial_{\bar{z}}\partial_{\bar{z}}f\right)\bar{\xi}^2
\end{equation}
\begin{equation}\label{e:div_pl}
\rho=\frac{2(1-\xi\bar{\xi})}{(1+\xi\bar{\xi})^3}\left(\xi^2\partial_z\partial_zf-(1+\xi^2\bar{\xi}^2)\partial_z\partial_{\bar{z}}f+\bar{\xi}^2\partial_{\bar{z}}\partial_{\bar{z}}f\right).
\end{equation}
As a consequence, the Gauss curvature is
\begin{equation}\label{e:gauss_curv}
\kappa=4\left(\frac{1-\xi\bar{\xi}}{1+\xi\bar{\xi}}\right)^4\left[\left(\partial_z\partial_{\bar{z}}f\right)^2-\partial_z\partial_zf\partial_{\bar{z}}\partial_{\bar{z}}f\right].
\end{equation}
\end{Prop}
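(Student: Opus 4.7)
The plan is to parameterize the surface by $\mu = z$ and apply the master formulas (\ref{e:spinco}) and (\ref{e:spincoa}) from Proposition \ref{p:bgnd}, which require $\partial_z\xi$, $\partial_{\bar z}\xi$, $\partial_z\eta$, $\partial_{\bar z}\eta$. I would obtain the derivatives of $\xi$ by implicit differentiation of (\ref{e:gmaprel}) and its conjugate $\bar\xi/(1-\xi\bar\xi) = -\partial_z f$. Applying $\partial_z$ to both and clearing the factor $(1-\xi\bar\xi)^2$ gives the $2\times 2$ system
\[
\partial_z\xi + \xi^2\partial_z\bar\xi = -(1-\xi\bar\xi)^2\partial_z\partial_{\bar z}f,\qquad
\bar\xi^2\partial_z\xi + \partial_z\bar\xi = -(1-\xi\bar\xi)^2\partial_z\partial_z f,
\]
whose determinant is $(1-\xi\bar\xi)(1+\xi\bar\xi)$. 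Cramer's rule yields
\[
\partial_z\xi = \frac{1-\xi\bar\xi}{1+\xi\bar\xi}\bigl(\xi^2\partial_z\partial_z f - \partial_z\partial_{\bar z}f\bigr),
\]
and the analogous computation with $\partial_{\bar z}$ gives $\partial_{\bar z}\xi$ with $\partial_z\partial_z f$ and $\partial_z\partial_{\bar z}f$ replaced by $\partial_z\partial_{\bar z}f$ and $\partial_{\bar z}\partial_{\bar z}f$. The derivatives of $\eta = \tfrac{1}{2}(z - 2f\xi - \bar z\xi^2)$ are then immediate.

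Next I would form the combination $r - 2\eta\bar\xi/(1+\xi\bar\xi)$ occurring in (\ref{e:spincoa}). The critical simplification is that, using the support function (\ref{e:support_pl}) together with (\ref{e:gmaprel}) and its conjugate to eliminate $\partial_z f, \partial_{\bar z}f$, the explicit dependence on $z$, $\bar z$, and $f$ cancels out of $\partial^+\eta$ and $\partial^-\eta$, leaving expressions involving only $\xi$, $\bar\xi$, $\partial_z\xi$, and $\partial_{\bar z}\xi$. Substituting into (\ref{e:spinco}) and tracking the powers of $(1-\xi\bar\xi)/(1+\xi\bar\xi)$ that arise from $\partial\xi$ and from the Jacobian-like denominator $\partial^-\eta\overline{\partial^-\eta} - \partial^+\eta\overline{\partial^+\eta}$ produces the claimed formulas (\ref{e:shear_pl}) and (\ref{e:div_pl}).

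The Gauss curvature formula is then an algebraic consequence of $\kappa = \kappa_1\kappa_2 = \rho^2 - |\sigma|^2$. Abbreviating $A = \partial_z\partial_z f$ and $B = \partial_z\partial_{\bar z}f$, the terms $\xi^4 A^2$, $\bar\xi^4\bar A^2$, and $-2B(1+\xi^2\bar\xi^2)(\xi^2 A + \bar\xi^2\bar A)$ cancel between $\rho^2$ and $|\sigma|^2$, leaving
\[
\rho^2 - |\sigma|^2 = \frac{4(1-\xi\bar\xi)^2}{(1+\xi\bar\xi)^6}\Bigl((1+\xi^2\bar\xi^2)^2 - 4\xi^2\bar\xi^2\Bigr)\bigl(B^2 - A\bar A\bigr),
\]
and since $(1+\xi^2\bar\xi^2)^2 - 4\xi^2\bar\xi^2 = (1-\xi\bar\xi)^2(1+\xi\bar\xi)^2$, equation (\ref{e:gauss_curv}) falls out. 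The main obstacle is the middle step: proving that $z$, $\bar z$, and $f$ really do disappear from $\partial^\pm\eta$ requires the support function to be exactly the combination of $f$ and $z\partial_z f + \bar z\partial_{\bar z}f$ displayed in (\ref{e:support_pl}), and the bookkeeping in combining the $r$-term with $\partial_{z,\bar z}\eta$ is where the proof is longest.
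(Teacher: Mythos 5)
Your proposal is correct and follows essentially the same route as the paper: the paper's proof simply states that (\ref{e:shear_pl}) and (\ref{e:div_pl}) follow by differentiating (\ref{e:xi_pl}) and (\ref{e:eta_pl}) and substituting into Proposition \ref{p:bgnd} with $\mu=z$, and that (\ref{e:gauss_curv}) follows from $\kappa=\rho\bar{\rho}-\sigma\bar{\sigma}$, which is exactly what you carry out in detail (your implicit differentiation of (\ref{e:gmaprel}), the Cramer's rule step, and the factorization $(1+\xi^2\bar{\xi}^2)^2-4\xi^2\bar{\xi}^2=(1-\xi\bar{\xi})^2(1+\xi\bar{\xi})^2$ all check out).
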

\begin{proof}
Equations (\ref{e:shear_pl}) and (\ref{e:div_pl}) follow by differentiation of the expressions given in (\ref{e:xi_pl})  and (\ref{e:eta_pl}), using the formulae in Proposition \ref{p:bgnd} for $\sigma$ and $\rho$. Here the general parameterization $\mu$ is replaced by the graph parameterization $\mu=z$. 

Equation (\ref{e:gauss_curv}) follows directly from equations (\ref{e:shear_pl}) and (\ref{e:div_pl}) via the fact that $\kappa=\rho\bar{\rho}-\sigma\bar{\sigma}$.
\end{proof}

\vspace{0.1in}
\section{Umbilic points at infinity}\label{s4}

The Gauss map at infinity takes any $A\in [0,2\pi)$ to
\[
\theta_\infty(A)={\textstyle{\frac{1}{2i}}}\lim_{R\rightarrow\infty}\ln\left.\left(\frac{\partial_{\bar{z}}f}{\partial_{{z}}f}\right)\right|_{z=Re^{iA}}={\textstyle{\frac{1}{2i}}}\lim_{R\rightarrow\infty}\ln\left.\left(\frac{\xi}{\bar{\xi}}\right)\right|_{z=Re^{iA}},
\]
if the limit exists. Note that if it is defined on the whole circle $\theta_\infty:S^1\rightarrow {\mathbb S}^1_\infty$, the Gauss map at infinity has a well defined winding number. This is not uniquely determined by the degree of the polynomial as, for example, the elliptic paraboloid has winding number $1$, while the hyperbolic paraboloid has winding number $-1$. 

Note also that
\begin{align}
\lim_{R\rightarrow\infty}|\kappa_1-\kappa_2|&=2\lim_{R\rightarrow\infty}|\sigma|={\textstyle{\frac{1}{2}}}\lim_{R\rightarrow\infty}(1-\xi\bar{\xi})|\bar{\xi}^{-2}\partial_z\partial_zf-2\partial_z\partial_{\bar{z}}f+\bar{\xi}^2\partial_{\bar{z}}\partial_{\bar{z}}f|\nonumber\\
&\qquad \qquad={\textstyle{\frac{1}{2}}}\lim_{R\rightarrow\infty}\frac{|\bar{\xi}^{-2}\partial_z\partial_zf-2\partial_z\partial_{\bar{z}}f+\bar{\xi}^2\partial_{\bar{z}}\partial_{\bar{z}}f|}{|\partial_zf|}.\nonumber
\end{align}
Thus, for a polynomial graph a generic direction (such that $\lim_{R\rightarrow\infty}\partial_zf\neq0$) this will be well-defined and, in fact, zero. This shows that Toponogov's conjecture, in the sense that
\[
\inf_{p\in P}|\kappa_1(p)-\kappa_2(p)|=0,
\]
holds for any polynomial graph. However, it also says that every polynomial graph has an umbilic point at infinity, in the sense of Toponogov.

Let us now turn to our stronger Definition \ref{d:umdata} of umbilic points at infinity. First, consider the following complex quantity for $A\in [0,2\pi)$:
\[
\tilde{\sigma}_\infty(A)=\lim_{R\rightarrow \infty}\left.\frac{\sigma}{1-|\xi|^2}\right|_{z=Re^{iA}},
\]
where $\sigma$ is given by equation (\ref{e:shear_pl}) and the limit may not exist. 

The distance to infinity is $d_\infty=1-|\xi|$, and so $1-|\xi|^2=d_\infty(2-d_\infty)$. Since the second factor is bounded and non-zero, 
\[
\tilde{\sigma}_\infty(A)=0 \qquad\qquad {\mbox{ iff}} \qquad\qquad \lim_{R\rightarrow \infty}\left.\frac{|\kappa_1-\kappa_2|}{d_\infty}\right|_{z=Re^{iA}}=0.
\]
Thus if this limit, which may not exist, not only exists but is zero for some $A$, then we say, as per Definition \ref{d:umdata}, that the surface has an umbilic point at infinity at $A$.  

\vspace{0.1in}
\begin{Prop}\label{p:4}
A graphical surface $P$ has an umbilic point at infinity at $A$ if any one of the following equivalent conditions hold
\begin{itemize}
\item[(i)] 
\[
\lim_{R\rightarrow \infty}\left.\frac{\kappa_1-\kappa_2}{d_\infty}\right|_{z=Re^{iA}}=0,
\]
\item[(ii)] 
\[
\tilde{\sigma}_\infty(A)=0, 
\]
\item[(iii)] 
\[
\lim_{R\rightarrow \infty}\left.e^{2i\theta_\infty}\partial_z\partial_zf-2\partial_z\partial_{\bar{z}}f+e^{-2i\theta_\infty}\partial_{\bar{z}}\partial_{\bar{z}}f\right|_{z=Re^{iA}}=0,
\]
\item[(iv)] there is an asymptotic direction at infinity that is tangent to the level set at infinity.
\end{itemize}
\end{Prop}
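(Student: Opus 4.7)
The plan is to establish a cycle (i) $\Leftrightarrow$ (ii) $\Leftrightarrow$ (iii) $\Leftrightarrow$ (iv), with the first three steps being algebraic manipulations of $\sigma$ and the last a geometric reinterpretation.

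The equivalence (i) $\Leftrightarrow$ (ii) follows directly from the discussion preceding the proposition: the identities $|\sigma|=\tfrac{1}{2}|\kappa_1-\kappa_2|$ and $1-|\xi|^2=d_\infty(2-d_\infty)$ imply that the limits in (i) and (ii) differ by a factor that is bounded and bounded away from zero as $d_\infty \to 0$, so one vanishes iff the other does.

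For (ii) $\Leftrightarrow$ (iii), I would substitute formula (\ref{e:shear_pl}) for $\sigma$ into the defining limit of $\tilde{\sigma}_\infty(A)$; the factor $1-\xi\bar{\xi}$ in the numerator cancels $1-|\xi|^2$ in the denominator. At infinity $(1+\xi\bar{\xi})^3 \to 8$ and $\xi \to e^{i\theta_\infty}$, so $\bar{\xi}^{-2}\to e^{2i\theta_\infty}$, $\bar{\xi}^2\to e^{-2i\theta_\infty}$, and the outer factor $\bar{\xi}^2$ becomes a unit complex number. Hence $\tilde{\sigma}_\infty(A)$ equals $\tfrac{1}{4}e^{-2i\theta_\infty}$ times the limit in (iii), and the equivalence follows.

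For (iii) $\Leftrightarrow$ (iv), I would express both the asymptotic-direction condition and the level-set tangent in complex coordinates. A tangent direction $w = dx^1 + idx^2$ to the graph is asymptotic iff
\[
w^2\partial_z\partial_z f + 2|w|^2 \partial_z\partial_{\bar{z}}f + \bar{w}^2 \partial_{\bar{z}}\partial_{\bar{z}} f = 0,
\]
which, after normalizing $w/|w| = e^{i\psi}$, becomes $e^{2i\psi}\partial_z\partial_z f + 2\partial_z\partial_{\bar{z}} f + e^{-2i\psi}\partial_{\bar{z}}\partial_{\bar{z}} f = 0$. Since $\nabla f$ corresponds to $2\partial_{\bar{z}}f$ as a complex number, the tangent to the level set $\{f=c\}$ lies in the direction $w \propto i\partial_{\bar{z}}f$; this gives $e^{2i\psi} = -\partial_{\bar{z}}f/\partial_z f$, which, combined with $\xi/\bar{\xi} = \partial_{\bar{z}}f/\partial_z f$, limits to $-e^{2i\theta_\infty}$. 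Substituting into the asymptotic condition yields precisely the expression in (iii), so the level-set tangent is asymptotic at infinity iff (iii) holds.

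The main obstacle is bookkeeping: one must track the various phases and signs in translating between real-geometric data (the gradient, perpendicularity, asymptotic vectors in the Hessian quadratic form) and the complex shear formula with its $\xi,\bar{\xi}$ factors. Once the dictionary $\nabla f \leftrightarrow 2\partial_{\bar{z}}f$ and $\xi/\bar{\xi} \to e^{2i\theta_\infty}$ is fixed, each equivalence reduces to a short computation.
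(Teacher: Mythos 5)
Your proposal is correct and follows essentially the same route as the paper: (i)$\iff$(ii) via the bounded nonvanishing factor $2-d_\infty$, (ii)$\iff$(iii) by substituting the shear formula (\ref{e:shear_pl}) and letting $\xi\to e^{i\theta_\infty}$, and (iii)$\iff$(iv) by recognising the resulting expression as the Hessian quadratic form evaluated on the level-set tangent $w\propto i\partial_{\bar z}f$ (the paper states this directly via equation (\ref{e:siginflevsets}); your substitution $e^{2i\psi}=-\xi/\bar\xi$ is the same computation with the phase made explicit, and the signs check out).
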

\begin{proof}
We have already seen that $(i)\iff (ii)$. 

From equation (\ref{e:gmaprel}) one can see that at infinity $\xi\rightarrow e^{i\theta_\infty}$ as $R\rightarrow\infty$. Here $\theta_\infty$ can be viewed as a function of $A$.

Then from equation (\ref{e:shear_pl}) we see that
\[
4|\tilde{\sigma}_\infty|=\lim_{R\rightarrow \infty}\left.\frac{4|\sigma|}{1-|\xi|^2}\right|_{z=Re^{iA}}=\lim_{R\rightarrow \infty}\left|e^{2i\theta}\partial_z\partial_zf-2\partial_z\partial_{\bar{z}}f+e^{-2i\theta}\partial_{\bar{z}}\partial_{\bar{z}}f\right|_{z=Re^{iA}}
\]
This proves the that $(ii)\iff (iii)$.

To see that $(iii)\iff (iv)$, note that we can rewrite
\begin{equation}\label{e:siginflevsets}
4|\tilde{\sigma}_\infty|=\lim_{R\rightarrow \infty}\left|\frac{(\partial_zf)^2\partial_z\partial_zf-2\partial_zf\partial_{\bar{z}}f\partial_z\partial_{\bar{z}}f+(\partial_{\bar{z}}f)^2\partial_{\bar{z}}\partial_{\bar{z}}f}{|\partial_zf|^2}\right|_{z=Re^{iA}}
\end{equation}
The quantity on the right-hand side is the Hessian evaluated on the tangent to the level set, so its vanishing  at infinity means that the tangent becomes an asymptotic direction of the second fundamental form at infinity.

This completes the proof of Proposition \ref{p:4}.
\end{proof}

\vspace{0.1in}

\section{Homogeneous Polynomial Graphs}\label{s5}

In the case where the graph function is a homogeneous polynomial, Proposition \ref{p:4} says much more about the umbilic points at infinity.  Suppose that $P$ is given by $x^3=f(z,\bar{z})$, where $f$ is a real homogeneous polynomial of degree $n$.

Define 
\begin{equation}\label{d:hf}
{\mathcal H}(f)=(\partial_zf)^2\partial_z\partial_zf-2\partial_zf\partial_{\bar{z}}f\partial_z\partial_{\bar{z}}f+(\partial_{\bar{z}}f)^2\partial_{\bar{z}}\partial_{\bar{z}}f.
\end{equation}

This is a real homogeneous polynomial of degree $3n-4$ and and so, writing $z=x^1+ix^2$,
\[
{\mathcal H}(f)=(x^1)^{3n-4}\tilde{\mathcal H}(f)(\mu),
\]
where $\mu=x^2/x^1=\tan A$. Thus by Proposition \ref{p:4} and equation (\ref{e:siginflevsets}) we have:

\vspace{0.1in}
\begin{Prop}\label{p:umainf2}
A polynomial graph has an umbilic at infinity at $A$ iff 
\[
\tilde{\mathcal H}(f)(\tan A)=0.
\]
\end{Prop}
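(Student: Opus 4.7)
The plan is a direct substitution into the characterization of $|\tilde\sigma_\infty|$ coming from equation (\ref{e:siginflevsets}). The whole argument is a page of bookkeeping once the homogeneity of $f$ is used to pull the $R$-dependence out of both numerator and denominator along the ray $z=Re^{iA}$.

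First I would justify the claims about $\mathcal{H}(f)$ stated immediately above the proposition: that it is a real-valued homogeneous polynomial of degree $3n-4$. Reality follows from $\overline{\partial_z f}=\partial_{\bar z}f$, since the outer two summands of $\mathcal{H}(f)$ are then complex conjugates of each other and the middle term $-2|\partial_z f|^2\,\partial_z\partial_{\bar z}f$ is manifestly real. The degree count in each summand is $2(n-1)+(n-2)=3n-4$. Consequently the factorization $\mathcal{H}(f)=(x^1)^{3n-4}\tilde{\mathcal{H}}(f)(\mu)$ with $\mu=x^2/x^1$ is well defined.

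Next I would invoke equation (\ref{e:siginflevsets}) of Proposition \ref{p:4}, which asserts
\[
4|\tilde\sigma_\infty(A)|=\lim_{R\to\infty}\left|\frac{\mathcal{H}(f)}{|\partial_z f|^2}\right|_{z=Re^{iA}}.
\]
Along the ray $z=Re^{iA}$ the numerator equals $(R\cos A)^{3n-4}\tilde{\mathcal{H}}(f)(\tan A)$, while the denominator, being a homogeneous polynomial of degree $2n-2$ in $(x^1,x^2)$, equals $R^{2n-2}g(A)$ for some non-negative function $g$ of the direction alone. The ratio is therefore
\[
R^{n-2}(\cos A)^{3n-4}\,\frac{\tilde{\mathcal{H}}(f)(\tan A)}{g(A)}.
\]
For $n\geq 2$ and at generic $A$ (where $g(A),\cos A\neq 0$) this tends to zero as $R\to\infty$ precisely when $\tilde{\mathcal{H}}(f)(\tan A)=0$: otherwise the prefactor $R^{n-2}(\cos A)^{3n-4}$ is either bounded away from zero (if $n=2$) or diverges (if $n>2$). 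Combined with part (ii) of Proposition \ref{p:4}, this is the required equivalence.

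The hard part will be the careful treatment of the exceptional directions. Where $\cos A=0$ the variable $\mu=\tan A$ is not defined; the fix is a symmetric factorization using $(x^2)^{3n-4}$ and the reciprocal parameter $\mu'=x^1/x^2$, which agrees with $\tilde{\mathcal{H}}(f)$ on the overlap by homogeneity, so an umbilic at infinity at $A=\pm\pi/2$ corresponds to $\tilde{\mathcal{H}}(f)(\mu)$ vanishing at infinity in $\mu$. Where $g(A)=0$ the function $\partial_z f$ vanishes along the whole ray, and Euler's identity $z\partial_z f+\bar z\partial_{\bar z}f=nf$ then forces $f$ itself to vanish along that ray; cancelling the resulting common real linear factor from $f$ reduces the computation to a homogeneous graph of strictly smaller degree, to which the same argument applies.
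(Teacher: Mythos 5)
Your proposal is correct and follows essentially the same route as the paper, whose proof consists of citing Proposition \ref{p:4} and equation (\ref{e:siginflevsets}); you supply the scaling computation along the ray $z=Re^{iA}$ (numerator homogeneous of degree $3n-4$ over denominator of degree $2n-2$, giving the prefactor $R^{n-2}$) that the paper leaves implicit. Your treatment of the exceptional rays where $\partial_z f$ vanishes identically goes beyond anything in the printed argument --- the proposed reduction to a lower-degree graph is not fully convincing as stated, since the surface is still the graph of $f$ rather than of the cofactor --- but the paper does not address those degenerate directions at all, so this does not count against you relative to its proof.
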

\vspace{0.1in}

\noindent{\bf Proof of Theorem \ref{t:1}}:

This follows from Proposition \ref{p:umainf2}, since the solutions of $\tilde{\mathcal H}(f)=0$ are isolated and there are at most $3n-4$ distinct solutions, each yielding two umbilic points at infinity: one at $A$ and one $A+\pi$, the antipodal direction.  
\qed

\vspace{0.1in}

\noindent{\bf Proof of Theorem \ref{t:2}}:

To prove this Theorem consider the following:

\vspace{0.1in}
\begin{Lem}
For any homogeneous real-valued polynomial $f$, the polynomial ${\mathcal H}(f)$ factorizes thus:
\[
{\mathcal H}(f)=-\frac{n}{4(n-1)}|{\mbox{Hess}}f|f,
\]
where $|{\mbox{Hess}}f|$ is the determinant of the Hessian of $f$.
\end{Lem}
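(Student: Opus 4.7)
The plan is to exploit the homogeneity of $f$ via three Euler identities. Set $p := \partial_z f$, $a := \partial_z^2 f$, $b := \partial_z\partial_{\bar z} f$; since $f$ is real, $\partial_{\bar z} f = \bar p$, $\partial_{\bar z}^2 f = \bar a$, and $b \in \R$. Applying Euler to $f$ (of degree $n$) and to its first partials (each of degree $n-1$) yields
\begin{equation*}
nf = zp + \bar z\bar p, \qquad (n-1)p = za + \bar z b, \qquad (n-1)\bar p = zb + \bar z\bar a.
\end{equation*}

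First I would re-express $|\Hess f|$ in complex coordinates. Using $\partial_x = \partial_z + \partial_{\bar z}$ and $\partial_y = i(\partial_z - \partial_{\bar z})$, a short calculation gives $|\Hess f| = 4(b^2 - a\bar a)$, so the desired identity rearranges to
\begin{equation*}
(n-1)\,\mathcal{H}(f) \;=\; n(a\bar a - b^2)\,f \;=\; (a\bar a - b^2)(zp + \bar z\bar p),
\end{equation*}
after invoking the first Euler relation on the right-hand side.

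Next I would multiply $\mathcal{H}(f)$ through by $n-1$ and use the derivative Euler relations to eliminate one factor of $p$ or $\bar p$ from each of its three terms, splitting the mixed contribution $-2p\bar p b$ symmetrically so that one copy of $p$ is substituted via $(n-1)p = za+\bar z b$ and one copy of $\bar p$ via $(n-1)\bar p = zb+\bar z\bar a$. Expanding yields a sum of eight monomials in $z,\bar z,a,\bar a,b,p,\bar p$; I would collect them by $z$-coefficient and $\bar z$-coefficient. The $zp$ and $\bar z\bar p$ parts collapse to exactly $(a\bar a - b^2)$, matching the target, while the off-diagonal $z\bar p$ and $\bar z p$ parts must cancel.

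The one step that is not bookkeeping is that final cancellation, and it is forced by a single hidden identity,
\begin{equation*}
z(a\bar p - bp) + \bar z(b\bar p - \bar a p) \;=\; 0,
\end{equation*}
which itself follows by computing $(n-1)|p|^2$ in two ways: multiplying the second Euler relation by $\bar p$, and multiplying the third by $p$, then equating. Once this identity is observed, the claimed factorisation drops out algebraically. The main (mild) obstacle is just to keep the eight-term expansion organised; there is no analytic difficulty beyond the two applications of Euler's theorem.
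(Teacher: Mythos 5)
Your overall strategy is the same as the paper's --- Euler's identity for $f$ and for its first derivatives, combined with the complex-coordinate formula $|\Hess f|=4(b^2-a\bar a)$ --- with the minor economy of clearing a single factor of $(n-1)$ rather than two. But the computation as you describe it does not close. Taking the definition (\ref{d:hf}) at face value, the three terms are $p^2a$, $-2p\bar p b$ and $\bar p^2\bar a$. Substituting $(n-1)p=za+\bar z b$ into one factor of the first term produces $zpa^2+\bar z pab$, so after your substitutions the coefficient of $zp$ in $(n-1)\mathcal{H}(f)$ is $a^2-b^2$, not $a\bar a-b^2$, and that of $\bar z\bar p$ is $\bar a^2-b^2$; the off-diagonal part comes to $b(a-\bar a)(\bar z p-z\bar p)$, which does not vanish and is not a consequence of your identity $z(a\bar p-bp)+\bar z(b\bar p-\bar a p)=0$ (that identity is linear in the second derivatives, the error term is quadratic in them, and no polynomial multiple of the one yields the other). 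Indeed the Lemma is simply false for $\mathcal{H}$ exactly as printed in (\ref{d:hf}): for $f=(x^1)^2x^2$ one computes $\mathcal{H}(f)=-\tfrac{5}{2}(x^1)^4x^2$, whereas $-\tfrac{n}{4(n-1)}|\Hess f|\,f=\tfrac{3}{2}(x^1)^4x^2$.

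Both the statement and your computation are rescued by pairing each first derivative with the \emph{conjugate} second derivative,
\[
\mathcal{H}(f)=(\partial_{\bar z}f)^2\partial_z\partial_zf-2\partial_zf\partial_{\bar z}f\,\partial_z\partial_{\bar z}f+(\partial_zf)^2\partial_{\bar z}\partial_{\bar z}f=\bar p^2a-2p\bar p b+p^2\bar a .
\]
This is the quantity equal to $-\tfrac{1}{4}$ times the Hessian evaluated on the level-set tangent $(-\partial_yf,\partial_xf)$, which is the geometric meaning the paper assigns to $\mathcal{H}$, and it is what the paper's own proof actually manipulates (its first display opens with $((n-1)\bar\partial f)^2\partial\partial f$); the printed definition carries a typo. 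With this pairing your scheme works and is in fact cleaner than you anticipate: one Euler substitution per term gives the eight monomials
\[
z\bar p ab+\bar z\bar p a\bar a+zpa\bar a+\bar z p b\bar a-za\bar p b-\bar z\bar p b^2-zpb^2-\bar z p\bar a b,
\]
in which the $z\bar p$- and $\bar z p$-coefficients vanish identically ($ab-ab$ and $b\bar a-\bar a b$), no hidden identity required, leaving $(n-1)\mathcal{H}(f)=(a\bar a-b^2)(zp+\bar z\bar p)=n(a\bar a-b^2)f=-\tfrac{n}{4}|\Hess f|\,f$. So: right method, but the one step you flagged as ``not bookkeeping'' is a phantom, and the bookkeeping itself only balances after the definition is corrected.
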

\begin{proof}
Let $f$ be a real homogeneous polynomial of $(z,\bar{z})$ of degree $n$ and denote $\partial_z$ and $\partial_{\bar{z}}$ by $\partial$ and $\bar{\partial}$, respectively.

Recall  Euler's Lemma in complex coordinates:
\[
nf=z\partial f+\bar{z}\bar{\partial}f,
\]
its first derivative
\begin{equation}\label{e:euler1}
(n-1)\partial f=z\partial\partial f+\bar{z}\partial\bar{\partial}f,
\end{equation}
and their concatenation
\begin{equation}\label{e:euler2}
n(n-1) f=z^2\partial\partial f+2z\bar{z}\partial\bar{\partial}f+\bar{z}^2\bar{\partial}\bar{\partial}f.
\end{equation}
Consider Definition \ref{d:hf} written in the following way
\begin{align*}
(n-1)^2 {\mathcal H}(f) &= ((n-1)\bar{\partial} f)^2\partial \partial f-2((n-1)\partial f)((n-1)\bar{\partial}f)\partial_z\bar{\partial}f+((n-1)\partial_{\bar{z}}f)^2\bar{\partial}\bar{\partial}f\\
& = [\partial\partial f\bar{\partial}\bar{\partial}f-(\partial\bar{\partial}f)^2](z^2\partial\partial f+2z\bar{z}\partial\bar{\partial}f+\bar{z}^2\bar{\partial}\bar{\partial}f)\\
& = n(n-1)[\partial\partial f\bar{\partial}\bar{\partial}f-(\partial\bar{\partial}f)^2]f\\
& = -{\textstyle{\frac{1}{4}}}n(n-1)| \mbox{Hess} f |f,
\end{align*}
where we have used equation (\ref{e:euler1}) going from the first to the second line, equation (\ref{e:euler2}) going from the second to the third, and the determinant of the Hessian of $f$ is 
\[
| \mbox{Hess} f |
=\partial_{x}\partial_{x}f\partial_{y}\partial_{y}f-(\partial_{x}\partial_{y}f)^2=-4(\partial\partial f\bar{\partial}\bar{\partial}f-(\partial\bar{\partial}f)^2).
\]
Finally, rearranging gives the Lemma.

\end{proof}
\vspace{0.1in}

Thus for a homogeneous graph polynomial there is an umbilic point at infinity where either $f=0$ or $|{\mbox{Hess}}f|=0$. But these are precisely the points where the projective extension of the principal foliation of $P$ has a singularity \cite{gao}.

Let us now look at this more closely. The essential idea, which originated with Poincar\'e \cite{poinc}, is to double the principal foliations over the 2-sphere. To do this, view the domain ${\mathbb R}^2$ as the horizontal plane in ${\mathbb R}^3$ through the point $(0,0,1)$. A point $p\in{\mathbb R}^2$ is mapped to either of the points on the upper or lower hemispheres of the unit 2-sphere given by intersection with the associated line through the origin. In either case the points at infinity lie on the equator. 

The two principal foliations of $P$ are pulled back to the open hemispheres, these foliations having singularities at umbilic points, which appear in antipodal pairs on the 2-sphere.

For polynomial graphs, there is a unique projective extension of these foliations to the points at infinity, lying on the equator. The foliations are extended by considering the third fundamental form and extending this quadratic form.

For a homogeneous polynomial the foliations has singularities at infinity iff either $f=0$ or $|{\mbox{Hess}}(f)|=0$ (see Theorem 1.7 of \cite{gao}), that is, it has an umbilic at infinity as defined in this paper.

Since each umbilic point $p$ is isolated, the foliation has a well-defined index $i(p)$ which satisfies
\[
\sum_{p\in S^2}i(p)=\sum_{p\in S^2/S^1}i(p)+\sum_{\mbox{p umbilic }\in S^1}i(p)=\chi(S^2)=2,
\]
where here we count both the finite umbilic points and the umbilic points at infinity. Moreover, in \cite{gao} it is proven that the index of an umbilic at infinity is $1/2$

Thus the sum of the umbilic indices on $P$ (excluding antipodal points from the interior) and the indices at infinity, there must be at least one at umbilic point, since: 
\[
\sum_{p\in P}i(p)+\sum_{p\in S^1}i(p)\geq 1,
\]
which proves Theorem \ref{t:2}. 

\qed

\vspace{0.1in}

\section{Paraboloids}\label{s6}

To explore umbilic points at infinity explicitly, consider the paraboloids $P$ satisfying:
\[
x^3=\frac{(x^1)^2}{a}+\frac{(x^2)^2}{b},
\]
for non-zero constants $a,b$. If $ab>0$ it is the elliptic paraboloid, while if $ab<0$ it is the hyperbolic paraboloid.

Parametrize $P$ by the inverse of the Gauss map $\pi:S\rightarrow {\mathbb S}^2$, which in complex coordinates centred at the North pole $\xi=\tan({\textstyle{\frac{\theta}{2}}})e^{i\phi}$ is
\[
x^1=-{{\frac{a}{2}}}\frac{\xi+\bar{\xi}}{1-\xi\bar{\xi}}=-{\textstyle{\frac{a}{2}}}\cos\phi\tan\theta
\qquad\qquad
x^2=-{{\frac{b}{2i}}}\frac{\xi-\bar{\xi}}{1-\xi\bar{\xi}}=-{\textstyle{\frac{b}{2}}}\sin\phi\tan\theta
\]
\[
x^3={{\frac{a}{4}}}\frac{(\xi+\bar{\xi})^2}{(1-\xi\bar{\xi})^2}
+{{\frac{b}{4}}}\frac{(\xi-\bar{\xi})^2}{(1-\xi\bar{\xi})^2}
=({\textstyle{\frac{a}{4}}}\cos^2\phi+{\textstyle{\frac{b}{4}}}\sin^2\phi)\tan^2\theta.
\]
The coordinates span the interior of the unit disc: $0\leq\phi<2\pi$ and $0\leq\theta<\pi/2$ or $|\xi|<1$.

The support function is
\[
r=\frac{-a(\xi+\bar{\xi})^2+b(\xi-\bar{\xi})^2}{4(1+\xi\bar{\xi})(1-\xi\bar{\xi})}.
\]
The 2-parameter family of oriented normal lines to $S$ form a section of $T{\mathbb S}^2\rightarrow {\mathbb S}^2$, with $\xi\mapsto(\xi,\eta(\xi,\bar{\xi}))$ where
\[
\eta={\textstyle{\frac{1}{2}}}(1+\xi\bar{\xi})^2\partial_{\bar{\xi}} r=-\frac{a(\xi+\bar{\xi})(1+\xi^3\bar{\xi})+b(\xi-\bar{\xi})(1-\xi^3\bar{\xi})}{4(1-\xi\bar{\xi})^2}.
\]
Finally we compute
\[
\sigma=-\frac{(1-\xi\bar{\xi})[a(1+\bar{\xi}^2)^2-b(1-\bar{\xi}^2)^2]}{ab(1+\xi\bar{\xi})^3}.
\]

\vspace{0.1in}
\begin{Prop}
The paraboloid with $ab>0$ has two umbilic points of index a half if $a\neq b$, and a single umbilic point of index one if $a=b$. It has no umbilic points at infinity.

The paraboloid with $ab<0$ has no umbilic points on the open unit disc, but has four umbilic points at infinity, each of index one half.
\end{Prop}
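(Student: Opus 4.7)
The plan is to read everything off the already-computed expression
\[
\sigma=-\frac{(1-\xi\bar{\xi})\,g(\bar\xi)}{ab(1+\xi\bar{\xi})^3},\qquad g(\bar\xi):=a(1+\bar{\xi}^2)^2-b(1-\bar{\xi}^2)^2,
\]
exploiting the striking fact that the non-trivial factor $g$ depends on $\bar\xi$ alone. Finite umbilic points are zeros of $\sigma$ on the open disc $|\xi|<1$, and umbilic points at infinity are zeros of the rescaled limit $\tilde\sigma_\infty=\lim\sigma/(1-|\xi|^2)$ as $|\xi|\to 1$, as discussed in Section \ref{s4}.

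For finite umbilics, since $(1-\xi\bar\xi)>0$ on the open disc, I would reduce to solving $g(\bar\xi)=0$. In the case $ab>0$ I would factor over the reals as $g(\bar\xi)=[\sqrt a(1+\bar\xi^2)-\sqrt b(1-\bar\xi^2)][\sqrt a(1+\bar\xi^2)+\sqrt b(1-\bar\xi^2)]$ and solve each linear-in-$\bar\xi^2$ factor. Of the two values of $\bar\xi^2$ produced, exactly one satisfies $|\bar\xi^2|<1$; this gives two umbilics $\pm\bar\xi_0$ in the disc when $a\neq b$, and collapses to the double root $\bar\xi^2=0$ when $a=b$. In the case $ab<0$, I would show by computing $|(1+\bar\xi^2)/(1-\bar\xi^2)|^2$ at the candidate imaginary values $\pm i\sqrt{|b/a|}$ that every solution lies on $|\bar\xi|=1$, so no finite umbilic exists on the open disc.

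For the indices I would invoke the rule $\sigma=H\xi^n\bar\xi^m\Rightarrow\mathrm{Ind}=(m-n)/2$ stated after Proposition 1. When $a=b$ the direct expansion at $\xi=0$ gives $\sigma=-\tfrac{4}{a}\bar\xi^2+O(|\xi|^4)$, hence index $1$. When $a\neq b$, at a simple zero $\xi_0$ of $g$, the crucial point is that $\sigma$ depends holomorphically on $\xi$ only through the prefactor $(1-\xi\bar\xi)/(1+\xi\bar\xi)^3$, whose $\xi$-derivative is killed by the vanishing of $g(\bar\xi_0)$; thus $\partial_\xi\sigma(\xi_0)=0$ while $\partial_{\bar\xi}\sigma(\xi_0)\neq 0$, giving a local model $\sigma=H(\bar\xi-\bar\xi_0)+\cdots$ and index $1/2$.

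For umbilics at infinity I would compute $\tilde\sigma_\infty$ by dropping the $(1-\xi\bar\xi)$ factor and substituting $\bar\xi=e^{-i\theta}$, using $(1+e^{-2i\theta})^2=4e^{-2i\theta}\cos^2\theta$ and $(1-e^{-2i\theta})^2=-4e^{-2i\theta}\sin^2\theta$, which cleanly reduces $\tilde\sigma_\infty$ to a non-zero multiple of $e^{-2i\theta}(a\cos^2\theta+b\sin^2\theta)$. The vanishing condition becomes $\tan^2\theta=-a/b$, which has no real solutions if $ab>0$ and exactly four solutions in $[0,2\pi)$ if $ab<0$. The four corresponding directions $A$ are then obtained from $\theta_\infty(A)=\phi$ via $\tan A=(b/a)\tan\phi$, and each has index $1/2$ by the general result of \cite{gao} cited in the proof of Theorem \ref{t:2}. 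The only delicate point is book-keeping in the $a=b$ case, where the two would-be index-$1/2$ points coalesce into a single index-$1$ umbilic; this is not really an obstacle, but it is the step where the algebraic degeneracy must be handled cleanly, and the Poincaré–Hopf count ($2\cdot 1/2+2\cdot 1/2=2$, $1+1=2$, $4\cdot 1/2=2$ respectively in the three cases) provides a useful consistency check.
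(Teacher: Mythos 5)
Your proposal is correct and follows essentially the same route as the paper: finite umbilics are located as the zeros of $\sigma$ (your observation that the two roots for $\bar{\xi}^2$ are reciprocal, so exactly one lies in the open disc, makes the count explicit), and the umbilics at infinity come from the vanishing of $\tilde{\sigma}_\infty$, which the paper detects equivalently through ${\mathcal H}(f)=-\frac{2}{ab}\left(\frac{x^2}{a}+\frac{y^2}{b}\right)$ while also recording the same formula for $\tilde{\sigma}_\infty$ that you use. The only substantive differences are that you supply an explicit local-model argument ($\partial_\xi\sigma=0$ and $\partial_{\bar\xi}\sigma\neq 0$ at a simple zero of the $\bar\xi$-dependent factor, hence index $\tfrac{1}{2}$) for the finite umbilics, which the paper asserts without computation, and that you cite \cite{gao} for the index at infinity where the paper instead expands $\sigma$ near $\xi=e^{\pi i/4}$ in the model case $a=-b=1$.
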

\begin{proof}
The finite umbilic points occur where $\sigma=0$, which is
\[
\bar{\xi}^2=\frac{-a-b\pm2\sqrt{ab}}{a-b}.
\]
For $ab>0$ this yields four solutions, two of which are in the unit disk, which coincide when $a=b$. 

For $ab<0$ the solutions are at infinity $|\xi|^2=1$. We find that 
\[
{\mathcal H}(f)=-\frac{2}{ab}\left(\frac{x^2}{a}+\frac{y^2}{b}\right),
\]
and so if $ab>0$ there are no solutions and hence no umbilic points at infinity, while if $ab<0$ the solution set is a pair of lines, giving the four umbilic points at infinity.

The Gauss map at infinity is 
\[
\xi_\infty^2=e^{2i\theta_\infty}=\frac{(a-b)e^{-iA}-(a+b)e^{iA}}{(a-b)e^{iA}-(a+b)e^{-iA}},
\]
which has winding number $+1$ for $ab>0$ and $-1$ for $ab<0$.

The quantity $\tilde{\sigma}_\infty$ is
\[
\tilde{\sigma}_\infty=-\frac{1}{8ab}\left[a(1+e^{-2i\theta_\infty})^2-b(1-e^{-2i\theta_\infty})^2\right].
\]
To compute the index, consider the case $a=-b=1$ so that 
\[
\sigma=\frac{1-\xi\bar{\xi}}{2(1+\xi\bar{\xi})^3}(1+\bar{\xi}^4).
\]
The isolated umbilic points at infinity lie at fourth roots of negative unity: ${\xi}=\pm e^{\pm\pi i/4}$.

To compute the index at, say, ${\xi}=e^{\pi i/4}$, first use a linear change of variables to move the umbilic to the origin: $\xi'=\xi-e^{\pi i/4}$. Expanding $\sigma$ in these coordinates yields (dropping the prime)
\[
\sigma=G(\xi,\bar{\xi})e^{3\pi i/4}\bar{\xi}+O(2),
\]
where $G$ is a non-zero real function. Thus the index is one half, in agreement with part (iii) of Theorem 1.4 of \cite{gao}.
\end{proof}

\newpage

\noindent{\bf Statements and Declarations}:

The author has no relevant financial or non-financial interests to disclose. No data was collected in the course of this research.
\vspace{0.1in}


\begin{thebibliography}{10}
\bibitem{ando}
N. Ando, {\it The behavior of the principal distributions on the graph of a homogeneous polynomial}, Tohoku Mathematical Journal {\bf 54.2} (2002) 163--177.

\bibitem{cheng_nakagawa}
Q. M. Cheng and H. Nakagawa, {\it Totally umbilic hypersurfaces}, Hiroshima Math. J. {\bf 20.1} (1990) 1--10.

\bibitem{guil}
B. Guilfoyle, {\it On isolated umbilic points}, Comm. Anal. Geom. {\bf 28.8} (2020) 2005--2018. 

\bibitem{gak2}
B. Guilfoyle and W. Klingenberg, {\it Generalised surfaces in ${\mathbb{R}}^3$}, Proc. of the R.I.A. {\bf 104A}, (2004) 199--209.

\bibitem{gak5}
B. Guilfoyle and W. Klingenberg, {\it On Weingarten surfaces in Euclidean and Lorentzian 3-space}, Differential Geom. Appl. {\bf 28.4} (2010) 454--468. 

\bibitem{gak_top}
B. Guilfoyle and W. Klingenberg, {\it Proof of a conjecture of Toponogov on complete convex surfaces}, ArXiv Preprint \url{https://arxiv.org/abs/2002.12787}.

\bibitem{gak_hopf}
B. Guilfoyle and W. Klingenberg, {\it Evolving to non-round Weingarten spheres: integer linear Hopf flows}, Partial Differ. Equ. Appl. {\bf 2} (2021) 72. 

\bibitem{gak_roots}
B. Guilfoyle and W. Klingenberg, {\it Roots of polynomials and umbilics of surfaces}, Results in Math. {\bf 78} (2023) 229.

\bibitem{gao}
B. Guilfoyle and A. Ortiz-Rodr\'iguez, {\it Umbilic points on the finite and infinite parts of certain algebraic surfaces}, Math. Proc. R. Ir. Acad. {\bf 123A.2} (2023)  63--94. 


\bibitem{Gui} 
V. Gu\'{\i}\~{n}ez,  {\it Non-orientable polynomial foliations on the plane}, J. Differ. Equations {\bf 87}  (1990) 391--411.

\bibitem{HandW1}
P. Hartman and A. Wintner, {\it Umbilical points and W-surfaces}, Amer. J. Math. {\bf 76.3} (1954) 502--508.

\bibitem{Hopf}
H. Hopf, {\it \"Uber Fl\"achen mit einer Relation zwischen den Hauptkr\"ummungen}, Math. Nachr. {\bf 4} (1950-1951) 232--249.

\bibitem{Hopf_book}
H. Hopf, Differential geometry in the large, Lecture Notes in Mathematics 1000, Springer-Verlag, 1983.

\bibitem{poinc} 
H. Poincar\'e, {\it M\'emoire sur les courbes d\'efinies par une \'equation di\-ff\'erentielle}, J. Math. Pure Appl. {\bf 7} (1881) 375--422.

\bibitem{sanini}
A. Sanini, {\it Gauss map of a surface of the Heisenberg group}, Boll. Un. Mat. Ital. B (7) {\bf 11} (1997) 79--93.

\bibitem{soum_toubiana}
R. Souam and E. Toubiana, {\it On the classification and regularity of umbilic surfaces in homogeneous 3-manifolds}, Mat. Contemp., {\bf 30} (2006) 201--215.

\bibitem{spivak}
M. Spivak, A comprehensive introduction to differential geometry, Vol. 3 and 4, Boston, Publish or Perish, 1970.

\bibitem{Top}
V.A. Toponogov, {\it On conditions for existence of umbilical points on a convex surface}, Siberian Mathematical Journal, {\bf 36.4}, No 4, (1995) 780--784.
\end{thebibliography}
\end{document}